\DeclareMathOperator{\OO}{\mathscr{O}}
\DeclareMathOperator{\PP}{\mathbb{P}}
\DeclareMathOperator{\QQ}{\mathbb{Q}}
\DeclareMathOperator{\Bim}{\mathrm{Bim}}
\DeclareMathOperator{\Aut}{\mathrm{Aut}}
\DeclareMathOperator{\Psaut}{\mathrm{PsAut}}
\numberwithin{equation}{section}
\begin{document} 

\title{Jordan property for groups of bimeromorphic automorphisms of compact K\"ahler threefolds}
\date{}
\author{Aleksei Golota}
\thanks{This work is supported by the Russian Science Foundation under grant 18-11-00121.}

\newtheorem{theorem}{Theorem}[section] 
\newtheorem*{ttheorem}{Theorem}
\newtheorem{lemma}[theorem]{Lemma}
\newtheorem{proposition}[theorem]{Proposition}
\newtheorem*{conjecture}{Conjecture}
\newtheorem{corollary}[theorem]{Corollary}

{\theoremstyle{remark}
\newtheorem{remark}[theorem]{Remark}
\newtheorem{example}[theorem]{Example}
\newtheorem{notation}[theorem]{Notation}
\newtheorem{question}{Question}
}

\theoremstyle{definition}
\newtheorem{construction}[theorem]{Construction}
\newtheorem{definition}[theorem]{Definition}

\begin{abstract} Let $X$ be a non-uniruled compact K\"ahler space of dimension 3. We show that the group of bimeromorphic automorphisms of $X$ is Jordan. More generally, the same result holds for any compact K\"ahler space admitting a quasi-minimal model.
\end{abstract}

\maketitle

\section{Introduction} A fruitful way to study various groups of geometric nature is to explore their finite subgroups. For example, we can consider groups of automorphisms of algebraic (e.g. projective) varieties. More challenging are groups of birational automorphisms, the case of Cremona group $\mathrm{Cr}_n(\mathbb{C}) = \mathrm{Bir}(\mathbb{P}^n_{\mathbb{C}})$ being particularly famous (see e.g. \cite{D\'es21} and references therein for more details). 

A well-known theorem of Jordan proved in \cite{Jor78} describes finite subgroups in the general linear group. It says that there exists a function $J \colon \mathbb{N} \to \mathbb{N}$ such that all finite subgroups in $\mathrm{GL}_n(\mathbb{C})$ can be obtained as extensions of groups of order at most $J(n)$ by finite abelian groups. Popov in \cite{Pop11} suggested the name ``Jordan groups'' for groups with the above property.

When $X$ is a projective variety of any dimension over a field $\Bbbk$ of $\mathrm{char}(\Bbbk) = 0$ the Jordan property for $\Aut(X)$ was established by Meng and Zhang in \cite{MZ18}. This result was extended to automorphism groups of compact K\"ahler spaces by Kim \cite{Kim18} and automorphism groups of compact complex spaces in Fujiki's class $\mathcal{C}$ by Meng, Perroni and Zhang \cite{MPZ20}. Also, Popov \cite{Pop18} proved that real Lie groups have the Jordan property; in particular, for any compact complex space $X$ the neutral component of $\Aut(X)$ is Jordan.

As for groups of birational automorphisms, the Jordan property for $\mathrm{Cr}_2(\mathbb{C}) = \mathrm{Bir}(\mathbb{P}^2_{\mathbb{C}})$ was established by Serre in \cite{Ser07}. For complex projective surfaces Popov in \cite{Pop11} verified the Jordan property in all cases except $\PP^1 \times E$ where $E$ is an elliptic curve. Later, Zarhin \cite{Zar14} showed that $\mathrm{Bir}(\PP^1 \times E)$ is not Jordan and provided similar examples in higher dimensions. Important results on Jordan property of $\mathrm{Bir}(X)$ for $X$ projective over a field of characteristic 0 were obtained by Prokhorov and Shramov. They proved that the Jordan property holds for $\mathrm{Bir}(X)$ for $X$ non-uniruled \cite{PS14} and for $X$ rationally connected \cite{PS16}.

For compact complex manifolds it is natural to consider the group $\Bim(X)$ of bimeromorphic maps from $X$ to itself. Prokhorov and Shramov \cite{PS21a} settled the case of compact complex surfaces. They proved that that automorphism groups of all compact complex surfaces are Jordan and that bimeromorphic automorphism groups of all compact complex surfaces except those bimeromorphic to $\PP^1 \times E$ are Jordan as well.

 In a series of papers \cite{PS18, PS20, PS21b} Prokhorov and Shramov studied groups of bimeromorphic selfmaps of compact K\"ahler spaces of dimension 3. In the uniruled case, they proved that $\Bim(X)$ is Jordan unless $X$ is bimeromorphic to a space from one of finitely many explicitly described families, see  \cite[Theorems 1.3 and 1.4]{PS20}. For groups of bimeromorphic automorphisms of non-uniruled K\"ahler spaces they were able to show the Jordan property under an additional assumption. Recall that the Kodaira dimension $\kappa(X)$ of a compact K\"ahler space $X$ is defined to be the Kodaira dimension of any smooth manifold $X'$ bimeromorphic to $X$. Analogously, the irregularity of $X$ is defined to be $q(X) = H^1(X', \OO_{X'})$ for any smooth manifold $X'$ bimeromorphic to $X$.

\begin{theorem}{\cite[Theorem 1.3]{PS21b}}\label{PStheorem}
Let $X$ be a compact K\"ahler space of dimension ~3. Assume that $\kappa(X) \geqslant 0$ and $q(X) > 0$. Then the group $\Bim(X)$ is Jordan.
\end{theorem}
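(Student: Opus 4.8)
The plan is to use the Albanese map to split $\Bim(X)$ into a part acting on a torus and a part acting on lower-dimensional fibres. Since $q(X)>0$, the Albanese torus $A:=\mathrm{Alb}(X)$ has dimension $q:=q(X)\in\{1,2,3\}$, and a smooth model $\widetilde X$ of $X$ carries the Albanese morphism $a\colon\widetilde X\to A$. Because $\kappa(X)\geqslant 0$ the space $X$ is non-uniruled, so holomorphic $1$-forms are a bimeromorphic invariant and extend over indeterminacy loci; hence every $f\in\Bim(X)=\Bim(\widetilde X)$ induces a well-defined element $\rho(f)\in\Aut(A)$ compatible with $a$, giving a homomorphism $\rho\colon\Bim(X)\to\Aut(A)$. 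First I would record that the base is essentially bounded: writing $\Aut(A)=A\rtimes\Gamma$ with $\Gamma=\Aut(A,0)\hookrightarrow\mathrm{GL}\big(H_1(A,\mathbb Z)\big)\cong\mathrm{GL}_{2q}(\mathbb Z)$, Minkowski's theorem bounds the finite subgroups of $\Gamma$ in terms of $q$. Thus the composite $\tau\colon\Bim(X)\xrightarrow{\rho}\Aut(A)\twoheadrightarrow\Gamma$ has image with bounded finite subgroups.

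Next I would analyse the kernel $K:=\ker\rho$. An element of $K$ preserves every fibre of $a$ and acts trivially on the base, so $K$ embeds into the group of birational self-maps of the generic fibre $X_\eta$ over $\mathbb C(A)$, whose geometric generic fibre $F$ has dimension $3-q\leqslant 2$. This $F$ is again non-uniruled, for otherwise the rational curves sweeping out a general fibre would sweep out $X$. When $\dim F=0$ the group $K$ is finite; when $\dim F=1$ the curve $F$ has genus $\geqslant1$, so $\mathrm{Bir}(F)=\Aut(F)$ is finite unless $F$ is elliptic, in which case it is abelian-by-finite; when $\dim F=2$ the Jordan property of $\mathrm{Bir}$ of a non-uniruled surface applies. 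In all cases $K$ is Jordan.

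The main difficulty is the assembly. The two controlled groups $\mathrm{Im}(\rho)\leqslant\Aut(A)$ and $K$ are Jordan but not bounded, the obstruction being the translation subgroup $A$ together with fibrewise translations on elliptic or abelian fibres; and the elementary extension lemmas require a \emph{bounded} kernel or quotient, while a group whose derived subgroup is merely bounded need not admit a large abelian subgroup. I therefore expect the real work to be in collecting all the ``continuous'' symmetries into a single normal \emph{abelian} subgroup with bounded quotient. Concretely, I would pass to a quasi-minimal model so that $\Bim(X)$ acts by pseudo-automorphisms and consider the induced action on $H^2(X,\mathbb Z)$ (and on the Kähler/movable cone). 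Finite subgroups of the image lie in some $\mathrm{GL}_N(\mathbb Z)$ and are bounded, whereas the kernel of this action should, by a rigidity argument for non-uniruled Kähler spaces, be a complex torus — precisely the continuous symmetries detected by $A$ — hence abelian and normal. The statement that an abelian normal subgroup with quotient of bounded finite subgroups forces the Jordan property then finishes the argument.

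I expect this last step — proving that the subgroup of $\Bim(X)$ acting trivially on cohomology is abelian (a torus), so that the unbounded abelian translation directions on the base and on the fibres are organised into one commuting group rather than an unruly extension — to be the genuine obstacle, and it is exactly here that the hypotheses $q(X)>0$ and $\kappa(X)\geqslant0$ (guaranteeing a quasi-minimal model with torus-like continuous automorphisms) are used.
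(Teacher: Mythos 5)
You should know that the paper never proves Theorem \ref{PStheorem} at all: it quotes it from \cite{PS21b} and instead proves the stronger Theorem \ref{MainThm3}, in which the hypothesis $q(X)>0$ is dropped. Measured against that, your entire Albanese analysis is dead weight --- your own assembly in the last two paragraphs never uses it --- and in the K\"ahler category it is also broken as written: the Albanese torus of a non-projective K\"ahler threefold need not be algebraic, so ``the generic fibre over $\mathbb{C}(A)$'' is meaningless (a generic complex torus has no nonconstant meromorphic functions), and the Albanese map need not be a fibre space onto $A$, so the embedding $K\hookrightarrow\mathrm{Bir}(F)$ with $\dim F=3-q$ does not follow. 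Your closing diagnosis also inverts the role of the hypotheses: $q(X)>0$ is not what makes the quasi-minimal-model argument work; it is precisely the assumption this paper's method eliminates. All that is used is $\kappa(X)\geqslant 0$, which forces $X$ to be non-uniruled, hence $K_X$ pseudoeffective, hence the existence of a minimal model by Theorem \ref{HP} (\cite{HP16}).

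The genuine gap is the step you yourself flag as the obstacle: the claim that the kernel of the action of $\Psaut(X')$ on $H^2(X',\mathbb{Q})$ is ``a complex torus, by a rigidity argument''. First, before any structure theory can apply you must know that this kernel consists of holomorphic automorphisms at all; a priori its elements are only pseudo-automorphisms of a singular space. This is the technical heart of the paper: an element of $\Psaut(X')_{\tau}$ fixes every K\"ahler class, and Theorem \ref{Fujiki} --- the generalization of Fujiki's theorem to normal compact K\"ahler spaces with rational singularities, proved via the extension Lemma \ref{FujikiLemma}, King's formula, and a curve-contraction argument exploiting projectivity of one leg of the resolution --- shows such a map is biholomorphic. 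You offer no substitute for this. Second, abelianness of the kernel is both unnecessary and false as stated: even for smooth non-uniruled $X$ the kernel of the cohomology action is only torus-by-finite (Lieberman), and for singular $X'$ one would need a separate argument; the paper instead invokes Kim's theorem (Theorem \ref{Kim}) that $\Aut$ of any normal compact K\"ahler space is Jordan and concludes with Proposition \ref{Extensions}, since the quotient embeds into $\mathrm{GL}(H^2(X',\mathbb{Q}))$ and has bounded finite subgroups by Minkowski (Theorem \ref{Minkowski}). Finally, your phrase ``pass to a quasi-minimal model so that $\Bim(X)$ acts by pseudo-automorphisms'' is itself a theorem, namely Proposition \ref{Kollar} (Hanamura--Koll\'ar), whose singular K\"ahler version the paper establishes via divisorial Zariski decompositions. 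With Theorems \ref{Fujiki} and \ref{Kim} and Propositions \ref{Kollar} and \ref{Extensions} supplied, your assembly does close --- but those results, not the torus claim, are the actual content.
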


The purpose of this note is to prove the following theorem.

\begin{theorem}\label{MainThm3} The group $\Bim(X)$ is Jordan for any non-uniruled compact K\"ahler space $X$ of dimension ~3.
\end{theorem}

To achieve this we generalize the arguments from \cite[Corollary 3.3]{Fuj81} and \cite[Proposition 4.5]{PS21b} to singular K\"ahler spaces, see Theorems ~\ref{Fujiki} and ~\ref{MainThm}, respectively. Together with existence of minimal models for $\QQ$-factorial terminal K\"ahler spaces of dimension 3 from \cite{HP16} this gives the desired conclusion of Theorem \ref{MainThm3}. In higher dimensions, we show that the same result holds assuming existence of a quasi-minimal model of $X$ (Theorem \ref{MainThm2}).

\textbf{Acknowledgement.} The author thanks Constantin Shramov for suggesting this problem and Yuri Prokhorov for valuable discussions.

\section{Generalities on the Jordan property} The following definition was suggested by Popov in \cite{Pop11}.

\begin{definition} \label{Jordan} Let $G$ be a group. We say that $G$ is {\em Jordan} (or has the {\em Jordan property}) if there is a constant $J(G) \in \mathbb{N}$ such that for any finite subgroup $H \subset G$ there is a normal abelian subgroup $A \subset H$ of index at most $J(G)$. 
\end{definition}

A classical result of C. Jordan says that this property holds for $G = \mathrm{GL}_n(\mathbb{C})$ (for a proof see e.g. \cite[Theorem 36.13]{CR62}). Clearly, a subgroup of a Jordan group is again Jordan; therefore, all linear algebraic groups over $\mathbb{C}$ are Jordan.

On the other hand, quotient groups and extensions of Jordan groups are not necessarily Jordan. To overcome this difficulty we also consider a more restrictive class of groups with bounded finite subgroups. Recall that a group $G$ has {\em bounded finite subgroups} if there is a number $B(G)$ such that any finite subgroup $H \subset G$ we have $|H| \leqslant B(G)$ (\cite[Definition 2.7]{Pop11}). An easy but important result below (see \cite[Lemma 2.9]{Pop11}) says that an extension of a group with bounded finite subgroups by a Jordan group remains Jordan.

\begin{proposition}\label{Extensions} Consider an exact sequence of groups $$ 1 \to G_1 \to G_2 \to G_3 \to 1.$$ Suppose that $G_1$ is Jordan and $G_3$ has bounded finite subgroups. Then $G_2$ is Jordan.
\end{proposition}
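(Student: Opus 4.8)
The plan is to take an arbitrary finite subgroup $H \subseteq G_2$ and manufacture from it a normal abelian subgroup whose index is bounded by a constant depending only on $J(G_1)$ and $B(G_3)$. Write $\pi \colon G_2 \to G_3$ for the surjection with kernel $G_1$. First I would cut $H$ along the exact sequence: set $H_1 = H \cap G_1$, which is a finite subgroup of $G_1$, and observe that $\pi$ restricts to an isomorphism $H/H_1 \xrightarrow{\sim} \pi(H)$. Since $\pi(H)$ is a finite subgroup of $G_3$, the bounded-subgroups hypothesis gives $[H : H_1] = |\pi(H)| \leqslant B(G_3)$.

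Next I would apply the Jordan property of $G_1$ to the finite subgroup $H_1$, obtaining a normal abelian subgroup $A_0 \subseteq H_1$ with $[H_1 : A_0] \leqslant J(G_1)$. Multiplying the two indices, $A_0$ is an abelian subgroup of $H$ with
$$ [H : A_0] = [H : H_1]\,[H_1 : A_0] \leqslant B(G_3)\, J(G_1) =: n. $$
So $H$ already contains an abelian subgroup of index at most $n$, where $n$ depends only on the two structural constants and not on $H$.

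The main obstacle is that $A_0$ need be normal only in $H_1$, not in all of $H$, so it is not yet the subgroup required by Definition~\ref{Jordan}. To repair this I would pass to the normal core $A = \bigcap_{h \in H} h A_0 h^{-1}$, the largest normal subgroup of $H$ contained in $A_0$. Being a subgroup of the abelian group $A_0$, it is abelian, and it is normal in $H$ by construction. To bound its index, consider the action of $H$ by left multiplication on the coset space $H/A_0$, which has $[H : A_0] \leqslant n$ elements; this yields a homomorphism $H \to \Sym(H/A_0)$ whose kernel is exactly $A$. Hence $H/A$ embeds into a symmetric group on at most $n$ letters, giving
$$ [H : A] \leqslant n! = \bigl(B(G_3)\, J(G_1)\bigr)!. $$
Since the right-hand side is independent of the chosen $H$, it serves as the constant $J(G_2)$, and $G_2$ is Jordan. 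I expect the only genuinely nontrivial point to be the normal-core step; the index bookkeeping across the exact sequence is routine, so the whole argument should be short.
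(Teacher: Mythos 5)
Your proof is correct. Note that the paper does not prove Proposition~\ref{Extensions} itself: it cites \cite[Lemma 2.9]{Pop11}, whose argument shares your skeleton --- set $H_1 = H \cap G_1$, bound $[H:H_1]$ by $B(G_3)$ via the injection $H/H_1 \hookrightarrow G_3$, apply the Jordan property of $G_1$ to $H_1$, then repair normality of the resulting abelian subgroup. Where you diverge is the repair step. You take the normal core of $A_0$ in $H$ and bound its index by embedding $H/A$ into $\Sym(H/A_0)$, getting the constant $\bigl(B(G_3)\,J(G_1)\bigr)!$. The standard route exploits a fact you left unused: $H_1$ is normal in $H$, being the kernel of $\pi|_H$. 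Since $A_0 \trianglelefteq H_1 \trianglelefteq H$, the normalizer $N_H(A_0)$ contains $H_1$, so $A_0$ has at most $[H:H_1] \leqslant B(G_3)$ distinct $H$-conjugates, each contained in $H_1$ with index at most $J(G_1)$ there; intersecting them yields a normal abelian subgroup of $H$ of index at most $B(G_3)\,J(G_1)^{B(G_3)}$. That bound is far smaller than your factorial one, but the Jordan property only asks for some uniform constant, so your coarser bound costs nothing, and your argument is complete as written.
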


Examples of groups with bounded finite subgroups are given by a classical theorem of Minkowski (see e. g. \cite[Theorem 1]{Ser07}). 

\begin{theorem}\label{Minkowski} The orders of finite subgroups in $G = \mathrm{GL}_n(\mathbb{Q})$ are bounded by a natural number $M(n)$ depending on $n$ only.
\end{theorem}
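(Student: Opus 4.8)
The plan is to reduce the statement to the finiteness of $\mathrm{GL}_n(\mathbb{F}_p)$ by passing to integral matrices and then reducing modulo a prime. First I would observe that any finite subgroup $H \subset \mathrm{GL}_n(\mathbb{Q})$ can be conjugated into $\mathrm{GL}_n(\mathbb{Z})$. Starting from the standard lattice $L = \mathbb{Z}^n$, the subgroup $L' = \sum_{h \in H} h(L) \subset \mathbb{Q}^n$ is a finitely generated, full-rank, $H$-invariant lattice, so choosing a $\mathbb{Z}$-basis of $L'$ turns the action of $H$ on $L'$ into an embedding $H \hookrightarrow \mathrm{GL}_n(\mathbb{Z})$ that is conjugate to the original inclusion. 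Since conjugation preserves the order of $H$, it suffices to bound the orders of finite subgroups of $\mathrm{GL}_n(\mathbb{Z})$.

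Next I would fix an odd prime, for concreteness $p = 3$, and consider the reduction homomorphism $\rho_p \colon \mathrm{GL}_n(\mathbb{Z}) \to \mathrm{GL}_n(\mathbb{F}_p)$. The key claim is that the restriction of $\rho_p$ to any finite subgroup $H$ is injective; equivalently, that the congruence kernel $\{A \in \mathrm{GL}_n(\mathbb{Z}) : A \equiv I \pmod p\}$ is torsion-free. Granting this, $H$ embeds into the finite group $\mathrm{GL}_n(\mathbb{F}_p)$, whence $|H| \leqslant |\mathrm{GL}_n(\mathbb{F}_p)| = \prod_{i=0}^{n-1}(p^n - p^i)$, a quantity depending only on $n$. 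Taking $M(n)$ to be this product yields the desired bound.

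The heart of the argument, and the step I expect to be most delicate, is the torsion-freeness of the congruence kernel. Suppose $A \equiv I \pmod p$ has finite order greater than $1$; passing to a suitable power I may assume $A$ has prime order $\ell$. Write $A = I + p^k B$ with $k \geqslant 1$ and $B$ an integer matrix not divisible by $p$. Expanding $A^\ell = (I + p^k B)^\ell = I$ by the binomial theorem and comparing $p$-adic valuations of the individual terms produces a contradiction: when $\ell \neq p$, the lowest-order term forces $\ell B \equiv 0 \pmod p$ and hence $B \equiv 0 \pmod p$; when $\ell = p$, the oddness of $p$ guarantees that $\binom{p}{2} p^{2k}$ and all higher terms are divisible by $p^{k+2}$, so that $p^{k+1}B \equiv 0 \pmod{p^{k+2}}$, again forcing $B \equiv 0 \pmod p$. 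Both conclusions contradict the choice of $B$. The hypothesis that $p$ is odd enters precisely in the subcase $\ell = p = 2$, where the binomial estimate breaks down; this is the reason one cannot simply reduce modulo $2$.
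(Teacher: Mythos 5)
Your proof is correct, and it is worth noting that the paper itself offers no proof of this statement at all: Theorem \ref{Minkowski} is quoted as a classical theorem of Minkowski with a pointer to \cite[Theorem 1]{Ser07}, where Serre even determines the sharp value of $M(n)$ as an explicit product of prime powers. Your argument is the standard self-contained route: conjugate a finite subgroup $H \subset \mathrm{GL}_n(\mathbb{Q})$ into $\mathrm{GL}_n(\mathbb{Z})$ via the $H$-invariant full-rank lattice $\sum_{h \in H} h(\mathbb{Z}^n)$, then embed $H$ into $\mathrm{GL}_n(\mathbb{F}_3)$ by showing that the kernel of reduction modulo an odd prime $p$ is torsion-free. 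Your case analysis is accurate: for $\ell \neq p$ the lowest-order term $\ell p^k B$ dominates modulo $p^{k+1}$ and $\ell$ is invertible mod $p$, while for $\ell = p$ the $p$-adic valuation of $\binom{p}{2}p^{2k}B^2$ is at least $2k+1 \geqslant k+2$ precisely because $p$ is odd and $k \geqslant 1$, so $p^{k+1}B \equiv 0 \pmod{p^{k+2}}$; and your remark that $p=2$ genuinely fails is right, as $-I$ lies in the mod-$2$ congruence kernel and has order $2$. The trade-off relative to the cited result is purely quantitative: you obtain the crude bound $M(n) = |\mathrm{GL}_n(\mathbb{F}_3)| = \prod_{i=0}^{n-1}(3^n - 3^i)$ rather than Serre's optimal constant, but since the paper invokes Theorem \ref{Minkowski} only to conclude that the image of $\Psaut(X)$ in $\mathrm{GL}(H^{2}(X,\mathbb{Q}))$ has bounded finite subgroups (in the proof of Theorem \ref{MainThm}, fed into Proposition \ref{Extensions}), any bound depending only on $n$ serves equally well.
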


We will need the following result of Kim \cite[Theorem 1.1]{Kim18} (for the notion of a singular K\"ahler space see Definition \ref{Kahler} below).

\begin{theorem}\label{Kim} Let $X$ be a normal compact K\"ahler space. Then the group $\Aut(X)$ of biholomorphic automorphisms of $X$ is Jordan.
\end{theorem}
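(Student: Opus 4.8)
The plan is to treat $\Aut(X)$ as a complex Lie group and to sandwich it between two subgroups to which Proposition \ref{Extensions} applies. Recall that for a reduced compact complex space $X$ the group $\Aut(X)$ is a complex Lie group, and its identity component $\Aut^0(X)$ is a connected complex Lie group; by the theorem of Popov on real Lie groups recalled above \cite{Pop18}, $\Aut^0(X)$ is already Jordan. I would combine this with the two standard constraints on $\Aut(X)$: its action on cohomology is defined over $\mathbb{Z}$, hence has bounded finite subgroups by Minkowski's theorem (Theorem \ref{Minkowski}); and the subgroup fixing a Kähler class meets only finitely many connected components of $\Aut(X)$.

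First I would consider the integral linear representation $\rho \colon \Aut(X) \to \mathrm{GL}(H^2(X, \mathbb{Z}))$, $g \mapsto (g^{-1})^*$, and set $N = \rk H^2(X, \mathbb{Z})$. Its image lies in $\mathrm{GL}_N(\mathbb{Z}) \subset \mathrm{GL}_N(\QQ)$, so by Theorem \ref{Minkowski} the group $\operatorname{im}(\rho)$ has bounded finite subgroups. In view of the exact sequence $1 \to \ker(\rho) \to \Aut(X) \to \operatorname{im}(\rho) \to 1$ and Proposition \ref{Extensions}, it therefore suffices to prove that $K := \ker(\rho)$ is Jordan.

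To analyze $K$, note first that $\Aut^0(X) \subseteq K$, because a connected group acts trivially on the discrete group $H^2(X, \mathbb{Z})$. For the reverse comparison I would invoke a Lieberman--Fujiki type finiteness statement (cf. \cite{Fuj81}): if $\kappa \in H^2(X, \mathbb{R})$ denotes a Kähler class, then the stabilizer $\Aut(X)_\kappa = \{ g : g^*\kappa = \kappa \}$ contains $\Aut^0(X)$ with finite index. Since every element of $K$ acts trivially on $H^2(X, \mathbb{Z})$, hence on $H^2(X, \mathbb{R})$, in particular $K \subseteq \Aut(X)_\kappa$, so $K / \Aut^0(X)$ is finite. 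Thus $K$ sits in an extension $1 \to \Aut^0(X) \to K \to F \to 1$ with $F$ finite; as $\Aut^0(X)$ is Jordan and a finite group has bounded finite subgroups, Proposition \ref{Extensions} shows that $K$ is Jordan, completing the argument.

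The hard part will be the Lieberman--Fujiki finiteness in the singular Kähler setting, which is not formal. I would establish it by passing to an $\Aut(X)$-equivariant (functorial) resolution $\pi \colon \tilde X \to X$, so that $\Aut(X)$ embeds into $\Aut(\tilde X)$, where $\tilde X$ is a smooth compact complex manifold of Fujiki's class $\mathcal{C}$, and then reduce the finiteness of components of the Kähler-class stabilizer to Fujiki's theorem for class $\mathcal{C}$ applied to the class $\pi^*\kappa$. The delicate points are that $\pi^*\kappa$ is only nef rather than Kähler on $\tilde X$, and that one must match up the identity components of $\Aut(X)$ and of $\Aut(\tilde X)$ so that the finite-index statement genuinely refers to $\Aut^0(X)$; these are precisely the places where the passage from the smooth Kähler case to the normal Kähler case requires real work, and where I would expect to spend most of the effort.
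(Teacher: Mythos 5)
You should first note a point of calibration: the paper does not prove Theorem \ref{Kim} at all --- it is imported verbatim from \cite{Kim18} (Theorem 1.1 there) and used as a black box in the proof of Theorem \ref{MainThm}. So your attempt has to be measured against Kim's own argument, and your skeleton is indeed essentially that argument: the exact sequence coming from the action on $H^2(X, \mathbb{Z})$ (pass to the torsion-free quotient so that Theorem \ref{Minkowski} literally applies; the kernel still acts trivially on $H^2(X,\mathbb{R})$), Jordanness of the identity component via \cite{Pop18}, and a Lieberman--Fujiki finiteness statement squeezing the kernel $K$ between $\Aut^0(X)$ and the stabilizer of a K\"ahler class. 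Via Proposition \ref{Extensions}, everything you wrote correctly reduces the theorem to the single claim that $\Aut(X)_\kappa / \Aut^0(X)$ is finite for a normal compact K\"ahler space $X$.

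That claim, however, is the entire mathematical content of \cite{Kim18}, and your sketch of it contains a genuine gap rather than a technicality. Lieberman--Fujiki finiteness is proved by bounding the volumes of the graphs $\Gamma_g \subset \tilde X \times \tilde X$ and invoking compactness of the components of the Douady space for class $\mathcal{C}$ (\cite{Fuj81}); for this one needs the elements $g$ to fix an honest K\"ahler class, because the volume $\int_{\Gamma_g} (\omega \boxplus \omega)^n = \sum_i \binom{n}{i} \int_{\tilde X} \omega^i \wedge g^*\omega^{n-i}$ is controlled by $g^*\omega$, and invariance of the merely nef-and-big class $\pi^*\kappa$ gives no control on $g^*\omega$ whatsoever. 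So ``apply Fujiki's theorem to $\pi^*\kappa$'' does not go through as stated --- this is exactly the delicate point you flagged, but flagging it is not closing it. The standard repair (essentially what Kim does) is: take a functorial resolution, which is a projective morphism, so $\tilde X$ is K\"ahler and carries K\"ahler classes of the form $\pi^*\kappa - \epsilon[E]$ with $E$ exceptional and $-E$ relatively ample; the lifts of elements of $K$ fix $\pi^*\kappa$ and permute the finitely many $\pi$-exceptional prime divisors, so a finite-index subgroup of $K$ fixes such a genuine K\"ahler class on $\tilde X$, and the smooth-case Lieberman theorem applies. Note that this also dissolves your second worry about matching identity components: you never need $\Aut^0(X)$ itself --- it suffices that a finite-index subgroup of $K$ lands in $\Aut^0(\tilde X)$, which is Jordan by \cite{Pop18}, and then two applications of Proposition \ref{Extensions} (a finite group trivially has bounded finite subgroups) finish the proof.
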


\section{Singular compact K\"ahler spaces and their minimal models}

We recall basic definitions related to singular K\"ahler spaces mostly following \cite{HP16}.

\begin{definition} \label{Kahler} Let $X$ be an irreducible and reduced complex space; denote the subsets of its singular and non-singular points by $X_{\mathrm{sing}}$ and $X_{\mathrm{ns}}$, respectively. A K\"ahler form on $X$ is a closed positive real $(1,1)$-form $\omega$ satisfying the following condition of {\em existence of local potentials}: for any $x \in X_{\mathrm{sing}}$ there exists an open neighborhood $x \in U \subset X$ with a closed embedding $i_U \colon U \subset V$ into an open subset $V \subset \mathbb{C}^N$ such that $$\omega|_{U \cap X_{\mathrm{ns}}} = i\partial\bar\partial f|_{U \cap X_{\mathrm{ns}}}$$ for a smooth strictly plurisubharmonic function $f \colon V \to \mathbb{C}$. An irreducible and reduced complex space $X$ is {\em K\"ahler} if there exists a K\"ahler form on $X$.
\end{definition}

\begin{remark}\label{Singularities} We restrict ourselves to singular K\"ahler spaces that are normal and have rational singularities. The latter condition ensures that the space $N^1(X)$ defined via $\partial\bar\partial$-cohomology embeds into $H^2(X, \mathbb{R})$ \cite[Remark 3.7]{HP16}. 
\end{remark}

The definition of terminal singularities for singular K\"ahler spaces is identical to the projective case. Every terminal singularity is rational by \cite{Elk81} (see also \cite[Theorem 5.22]{KM98}).

To define minimal and quasi-minimal models of compact K\"ahler spaces we need to introduce notions of nefness and ``nefness in codimension one'' in non-projective context (see \cite{Bou04, HP16} for more details).

\begin{definition}\label{Nef} Let $X$ be a normal compact K\"ahler space with rational singularities. We say that a class $\alpha \in H^{1,1}(X, \mathbb{R})$ is {\em nef} if it belongs to the closure of the cone of K\"ahler classes. Also, a class $\alpha$ is called {\em semipositive} if there exists a smooth semipositive form representing $\alpha$.
\end{definition}

\begin{definition}\label{ModNef} Let $X$ be a normal compact K\"ahler space with rational singularities. We say that a class $\alpha \in H^{1,1}(X, \mathbb{R})$ is {\em modified nef} if it belongs to the closure of the cone generated by classes of the form $\mu_*\omega$ where $\mu \colon Y \to X$ is an arbitrary bimeromorphic morphism from a smooth K\"ahler manifold $Y$ and $\omega$ is a K\"ahler class on $Y$.
\end{definition}

Recall that a class $\alpha \in H^{1,1}(X, \mathbb{R})$ is {\em pseudoeffective} if it can be represented by a closed positive $(1,1)$-current with local potentials. For any pseudoeffective class on a smooth manifold $X$ Boucksom defined a so-called {\em divisorial Zariski decomposition} \cite[Definition 3.7]{Bou04}.

\begin{proposition}\label{DivZar} Let $X$ be a compact K\"ahler manifold and let $\alpha \in H^{1,1}(X, \mathbb{R})$ be a pseudoeffective class. Then there exists a {\em divisorial Zariski decomposition} $$\alpha = P(\alpha) + N(\alpha)$$ such that \begin{itemize} \item The {\em positive part} $P(\alpha)$ is a modified nef class; \item The {\em negative part} $N(\alpha) = \sum_ia_i[D_i]$ is a class of an effective $\mathbb{R}$-divisor, which is exceptional in the sense that the cone generated by the classes $[D_i]$ intersects the cone of modified nef classes at 0 only.
\end{itemize}
\end{proposition}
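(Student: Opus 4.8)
The plan is to reconstruct Boucksom's decomposition (\cite{Bou04}) through the \emph{minimal multiplicities} of $\alpha$ along prime divisors. Fix a K\"ahler form $\omega$ on $X$. Since $\alpha$ is pseudoeffective, for every $\epsilon > 0$ the class $\alpha$ contains closed positive currents $T$ with $T \geq -\epsilon\omega$; for a prime divisor $D \subset X$ I would set
\[
\nu(\alpha, D) := \sup_{\epsilon > 0} \inf \{ \nu(T, D) : T \in \alpha, \ T \geq -\epsilon\omega \},
\]
where $\nu(T, D)$ is the generic Lelong number of $T$ along $D$, i.e. the Lelong number of $T$ at a very general point of $D$. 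Monotonicity in $\epsilon$ makes the supremum a limit as $\epsilon \to 0^+$. The first task is to record the elementary behaviour of $\nu(\alpha, \cdot)$: it is finite, nonnegative, convex and positively homogeneous in $\alpha$, and subtracting an effective divisor shifts it linearly, $\nu(\alpha - cD, D) = \nu(\alpha, D) - c$ as long as the right side stays nonnegative. Demailly's regularization of closed positive currents is the main analytic tool here, used to approximate currents of minimal singularities while keeping generic Lelong numbers controlled.

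The heart of the argument, and the step I expect to be the main obstacle, is to show that only finitely many prime divisors $D$ satisfy $\nu(\alpha, D) > 0$ and that their classes $[D_1], \dots, [D_r]$ are linearly independent in $H^{1,1}(X, \mathbb{R})$. I would first verify that any finite subfamily $D_1, \dots, D_k$ with $\nu(\alpha, D_i) > 0$ is \emph{exceptional}, in the sense that no nontrivial combination $\sum b_i [D_i]$ with $b_i \geq 0$ is modified nef: a modified nef class has vanishing generic multiplicity along every prime divisor, whereas the positivity built into $\nu(\alpha, D_i) > 0$ obstructs this for any such combination. The key structural input is then a negativity statement: the classes of an exceptional family are linearly independent. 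This generalizes the classical fact that the intersection form on the exceptional locus of a resolution of surface singularities is negative definite, and in the K\"ahler setting it is obtained from the duality between the modified nef cone and the pseudoeffective cone. Linear independence immediately bounds $r \leq \dim H^{1,1}(X, \mathbb{R})$, giving finiteness.

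With finiteness established I would define $N(\alpha) = \sum_{i=1}^{r} \nu(\alpha, D_i)[D_i]$, which is the class of an effective $\mathbb{R}$-divisor, and $P(\alpha) = \alpha - N(\alpha)$. Using the linear-shift property of $\nu$, one checks that $\nu(P(\alpha), D) = 0$ for every prime divisor $D$: subtracting precisely the minimal multiplicities annihilates all generic Lelong numbers. The remaining ingredient is Boucksom's characterization of the modified nef cone as exactly the set of pseudoeffective classes with vanishing generic multiplicity along every prime divisor; granting this, $P(\alpha)$ is modified nef, establishing the first bullet. The exceptionality in the second bullet, namely that the cone $\sum \mathbb{R}_{\geq 0}[D_i]$ meets the modified nef cone only at $0$, is precisely the exceptionality proved in the previous step. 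Finally, uniqueness of the decomposition follows because the multiplicities $\nu(\alpha, D_i)$ are intrinsically attached to $\alpha$, so that $P(\alpha)$ and $N(\alpha)$ are determined by $\alpha$ alone.
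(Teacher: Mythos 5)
This proposition is stated in the paper without proof, as a recalled result of Boucksom with a citation to \cite{Bou04}, and your proposal reconstructs exactly Boucksom's original argument: minimal multiplicities $\nu(\alpha,D)$ defined via generic Lelong numbers of currents $T\in\alpha$ with $T\geqslant -\epsilon\omega$, exceptionality of any family of divisors with positive minimal multiplicity, linear independence of the classes of an exceptional family (hence finiteness of the components of $N(\alpha)$), and the characterization of modified nef classes as the pseudoeffective classes with vanishing minimal multiplicity along every prime divisor. So your route is essentially the same as the one the paper relies on; the only slight misattribution is that in \cite{Bou04} the linear-independence lemma is deduced from the vanishing-multiplicity characterization of modified nef classes (an effective divisor whose class admits a representative avoiding each component has all minimal multiplicities zero), not from a duality between the modified nef and pseudoeffective cones, but this does not change the structure of your argument.
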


\begin{remark}\label{ZarSing} For $X$ a normal compact K\"ahler space with rational singularities we can still define divisorial Zariski decomposition of a pseudoeffective class $\alpha \in H^{1,1}(X, \mathbb{R})$. To do so, we take a resolution of singularities $\mu \colon Y \to X$ and apply Proposition \ref{DivZar} to $\mu^*\alpha$ to obtain a decomposition $$\mu^*\alpha = P(\mu^*\alpha) + N(\mu^*\alpha).$$ Then we can define $P(\alpha) = \mu_*P(\mu^*\alpha)$ and $N(\alpha) = \mu_*N(\mu^*\alpha)$. Since $X$ is nonsingular in codimension one, the decomposition $\alpha = P(\alpha) + N(\alpha)$ satisfies the conditions in Proposition \ref{DivZar}. Moreover, it does not depend on a choice of $\mu \colon Y \to X$ since any two resolutions of $X$ can be dominated by a common resolution.
\end{remark}

We define minimal models of singular compact K\"ahler spaces following the conventions in \cite{HP16}. Note that the definition of $\mathbb{Q}$-factorial singularities in \cite{HP16} is as follows: every Weil divisor is $\mathbb{Q}$-Cartier and some reflexive power of the dualizing sheaf $K_X$ is a line bundle. 

\begin{definition} A compact K\"ahler space $X$ with terminal $\mathbb{Q}$-factorial singularities is called \begin{itemize} \item {\em minimal} (or a {\em minimal model}) if the canonical class $K_X$ is nef; \item {\em quasi-minimal} (or a {\em quasi-minimal model}) if $K_X$ is modified nef. \end{itemize}  
\end{definition}

Let $X$ be a compact K\"ahler space of dimension 3 with terminal $\mathbb{Q}$-factorial singularities. Suppose that $X$ is not uniruled; then the canonical class $K_X$ is pseudoeffective. This follows from \cite[Theorem ~2.6]{BDPP13} for $X$ projective in any dimension and from \cite{Bru06} for $X$ non-projective in dimension 3. In \cite[Theorem ~1.1]{HP16} H\"oring and Peternell proved existence of minimal models for compact K\"ahler spaces of dimension 3 with terminal $\mathbb{Q}$-factorial singularities and $K_X$ pseudoeffective. Thus, any non-uniruled compact K\"ahler space has a minimal model. 

\begin{theorem} \label{HP} Let $X$ be a terminal $\QQ$-factorial compact K\"ahler space of dimension 3. Suppose that $X$ is not uniruled. Then there exists a sequence of bimeromorphic maps $\varphi \colon X \dasharrow X'$ such that $X'$ is a minimal model.
\end{theorem}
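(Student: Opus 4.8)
The plan is to reduce the statement to the existence theorem for minimal models of H\"oring and Peternell, whose only hypothesis beyond the ones already imposed on $X$ is pseudoeffectivity of the canonical class. Thus it suffices to check that $K_X$ is pseudoeffective and then invoke \cite[Theorem 1.1]{HP16}.

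First I would verify that $K_X \in H^{1,1}(X, \mathbb{R})$ is pseudoeffective. The relevant input is the characterization of pseudoeffectivity of the canonical class through (non-)uniruledness. Fix a resolution of singularities $\mu \colon Y \to X$ with $Y$ a smooth compact K\"ahler manifold; since uniruledness is a bimeromorphic invariant, $Y$ is non-uniruled as well. If $X$ is projective, then $Y$ can be chosen to be projective too, and the theorem of Boucksom--Demailly--P\u{a}un--Peternell \cite[Theorem 2.6]{BDPP13} shows that $K_Y$ is pseudoeffective; if $X$ is a non-projective compact K\"ahler threefold, the same conclusion for $K_Y$ follows from the work of Brunella \cite{Bru06}. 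Because $X$ has terminal singularities we may write $K_Y = \mu^* K_X + \sum_i a_i E_i$ with all $a_i > 0$, so that $\mu_* K_Y = K_X$; as the pushforward of a pseudoeffective class is again pseudoeffective, we conclude that $K_X$ is pseudoeffective.

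With $K_X$ pseudoeffective in hand, $X$ satisfies exactly the hypotheses of \cite[Theorem 1.1]{HP16}: it is a $\mathbb{Q}$-factorial terminal compact K\"ahler space of dimension $3$ whose canonical class is pseudoeffective. That theorem produces a finite sequence of divisorial contractions and flips $\varphi \colon X \dashrightarrow X'$, each step staying within the class of $\mathbb{Q}$-factorial terminal compact K\"ahler threefolds and terminating in a model $X'$ for which $K_{X'}$ is nef. By our definition $X'$ is then a minimal model, which is the desired conclusion.

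The entire content of the argument sits in the cited results rather than in any computation of our own. In particular, the genuinely hard part is internal to \cite{HP16}, namely the cone and contraction theorems together with the termination of flips for the K\"ahler minimal model program in dimension three; on our side the only subtlety is the compatibility of pseudoeffectivity with resolution, which we have handled above using the terminal hypothesis.
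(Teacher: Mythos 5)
Your proposal is correct and follows essentially the same route as the paper, which likewise deduces pseudoeffectivity of $K_X$ from non-uniruledness via \cite[Theorem 2.6]{BDPP13} in the projective case and \cite{Bru06} in the non-projective K\"ahler threefold case, and then invokes \cite[Theorem 1.1]{HP16}. The only difference is that you spell out the descent of pseudoeffectivity through a resolution (using $\mu_*K_Y = K_X$), a step the paper leaves implicit; this is a harmless and correct elaboration.
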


Note that any minimal model is also a quasi-minimal model. When $X$ is projective, our definition of quasi-minimal models is equivalent to \cite[Definition 4.2]{PS14}. Existence of quasi-minimal models for projective non-uniruled varieties with terminal $\mathbb{Q}$-factorial singularities was shown in \cite[Lemma 4.4]{PS14}.

\section{Bimeromorphic maps of quasi-minimal models}

In this section we consider bimeromorphic maps of normal compact K\"ahler spaces with rational singularities and discuss sufficient conditions for such maps to be holomorphic. Recall that for any bimeromorphic map $f \colon X \dasharrow X'$ of reduced complex spaces there exists a resolution of indeterminacies: 
\begin{equation}
\label{Diagram}
\xymatrix{
& Y \ar[dl]_{p} \ar[dr]^{q}\\
X \ar@{-->}[rr]^{f} && X'
}
\end{equation}
Here $Y$ is a smooth complex manifold and $p$ and $q$ are bimeromorphic morphisms.

The following fact is well-known to experts; we include a proof for reader's convenience.

\begin{lemma}\label{Isolated} Let $f \colon X \dasharrow X'$ be a bimeromorphic map of reduced complex spaces such that $X'$ is normal. Suppose that there exists a resolution \eqref{Diagram} such that every fiber of $q$ is mapped by $p$ to a point. Then the inverse map $f^{-1}$ is holomorphic.
\end{lemma}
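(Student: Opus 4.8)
The plan is to realize $f^{-1}$ directly as a holomorphic map $g \colon X' \to X$, by showing that the morphism $p$ factors through $q$. First I would use that $q$, being a bimeromorphic morphism, is proper and surjective; combined with the hypothesis that $p$ contracts each fiber $q^{-1}(x')$ to a single point, this lets me define $g$ set-theoretically by $g(x') = p\bigl(q^{-1}(x')\bigr)$, so that $g \circ q = p$. Continuity of $g$ is then automatic: a proper continuous surjection onto a locally compact Hausdorff space is closed, hence a quotient map, and since $g \circ q = p$ is continuous, $g$ is continuous.

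The substance of the proof is promoting $g$ to a holomorphic map, and this is precisely where normality of $X'$ is used. Since $q \colon Y \to X'$ is a proper bimeromorphic morphism onto the normal space $X'$, we have $q_*\mathcal{O}_Y = \mathcal{O}_{X'}$: a holomorphic function on $q^{-1}(V)$ restricts to a bounded holomorphic function on the locus where $q$ is an isomorphism, extends to a weakly holomorphic function on $V$, and is genuinely holomorphic there by normality. Working locally near a point $x' \in X'$, I would pick an open $V \ni x'$ and a closed embedding of a neighborhood of $g(x')$ in $X$ into some $\mathbb{C}^N$, so that $p|_{q^{-1}(V)}$ is encoded by holomorphic functions $p_1, \dots, p_N \in \mathcal{O}_Y(q^{-1}(V))$. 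By the identity $q_*\mathcal{O}_Y = \mathcal{O}_{X'}$ each $p_i$ descends to a holomorphic $\bar p_i \in \mathcal{O}_{X'}(V)$ with $p_i = \bar p_i \circ q$; these assemble into a holomorphic map $V \to \mathbb{C}^N$ whose image lies in $X$, i.e. into a holomorphic map extending $g|_V$. Hence $g$ is holomorphic on all of $X'$.

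It remains to identify $g$ with $f^{-1}$. On the dense open subset of $X'$ over which $f$ is a biholomorphism the construction gives $g = f^{-1}$, so $g$ is the sought holomorphic extension of $f^{-1}$, completing the proof.

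I expect the one genuinely delicate step to be the holomorphicity claim, namely the clean use of $q_*\mathcal{O}_Y = \mathcal{O}_{X'}$ to descend the components of $p$: one must argue that functions constant along the fibers of $q$ descend not merely to continuous but to \emph{holomorphic} functions on $X'$, which is exactly the extension-of-weakly-holomorphic-functions property guaranteed by normality. Everything else is formal.
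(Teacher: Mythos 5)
Your proof is correct, and at the decisive step it takes a genuinely different route from the paper's. Both arguments start identically: the hypothesis that $p$ contracts every fiber of $q$ gives a set-theoretic factorization $g$ with $g \circ q = p$, continuous because the proper surjection $q$ is closed and hence a quotient map (the paper compresses this into the single sentence ``we can extend $f^{-1}$ to a continuous map''), and both reduce holomorphy of $g$ to a statement about scalar functions by embedding a neighborhood of $g(x')$ in $X$ into $\mathbb{C}^N$. The divergence is in how those $N$ functions are shown to be holomorphic. The paper works on $X'$ directly: the components of the continuous extension are holomorphic off $V \cap \mathrm{Ind}(f^{-1})$, which is an analytic subset of codimension at least $2$, and the Riemann extension theorem for normal complex spaces \cite[p.~144]{GR84} then extends them across the indeterminacy locus. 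You instead descend the components of $p$ along $q$ via $q_*\mathcal{O}_Y = \mathcal{O}_{X'}$, which you prove by the first Riemann extension theorem: local boundedness (supplied by properness of $q$) plus holomorphy off the nowhere dense degeneracy locus, plus normality. What your route buys: you never need the fact that $\mathrm{Ind}(f^{-1})$ is analytic of codimension $\geqslant 2$ --- a property of meromorphic maps out of normal spaces that the paper invokes without comment --- only the much softer thinness of the degeneracy locus of $q$; in effect you establish the classical universal property (rigidity) of proper modifications satisfying $q_*\mathcal{O}_Y = \mathcal{O}_{X'}$, of which the lemma is an instance, so your argument is more self-contained and slightly more general. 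What the paper's route buys: it skips proving $q_*\mathcal{O}_Y = \mathcal{O}_{X'}$ altogether and quotes a single textbook extension theorem, making for a shorter proof. Both proofs use normality of $X'$ at the same essential point (extension of weakly holomorphic functions), and both implicitly rely on properness of $q$, which you are right to make explicit; your one glossed step --- that the descended map $V \to \mathbb{C}^N$ actually lands in the embedded neighborhood of $g(x')$ in $X$ --- follows from the continuity of $g$ you already established, so it is not a gap.
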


\begin{proof} Suppose that there is a point $x$ in the indeterminacy set $\mathrm{Ind}(f^{-1})$. By assumption we can extend the map $f^{-1}$ to a continuous (but a priori not holomorphic) map $X' \to X$. There is an open subset $U \subset X$ containing $f^{-1}(x)$ such that $U$ embeds as an open subset into $\mathbb{C}^N$. In other words, in some neighborhood $V \subset X'$ the map $f^{-1}$ is given by an $N$-tuple of functions which are holomorphic outside a subset $V \cap \mathrm{Ind}(f^{-1})$ of codimension at least 2 in $V$. Since $X'$ (and thus $V$) is normal, by the extension theorem \cite[p. 144]{GR84} the map $f^{-1}$ extends holomorphically to $x$.
\end{proof}

The following proposition is due to Hanamura \cite[Lemma 3.4]{Ha87} and Koll\'ar \cite[Lemma 4.3]{Kol89} in the case of minimal algebraic varieties. We prove a slightly more general version using divisorial Zariski decompositions.

\begin{proposition} \label{Kollar} Let $f \colon X \dasharrow X'$ be a bimeromorphic map between compact K\"ahler spaces with terminal singularities. Suppose that $K_X$ and $K_{X'}$ are modified nef. Then $f$ is an isomorphism in codimension one.
\end{proposition}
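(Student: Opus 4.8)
The plan is to resolve the indeterminacy of $f$ as in \eqref{Diagram}, giving a smooth $Y$ with bimeromorphic morphisms $p$ and $q$, and then to compare the exceptional divisors of $p$ and $q$ by means of the divisorial Zariski decomposition on $Y$. The key reduction is that $f$ is an isomorphism in codimension one if and only if a prime divisor on $Y$ is $p$-exceptional exactly when it is $q$-exceptional: a prime divisor on $Y$ that is $p$-exceptional but not $q$-exceptional maps to a prime divisor on $X'$ contracted by $f^{-1}$, and symmetrically. Hence it suffices to prove that the set of $p$-exceptional prime divisors on $Y$ coincides with the set of $q$-exceptional ones.

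Since $X$ and $X'$ have terminal singularities, I would write the discrepancy formulas
$$K_Y = p^*K_X + E, \qquad K_Y = q^*K_{X'} + E',$$
where $E$ and $E'$ are effective $\mathbb{R}$-divisors that are $p$-exceptional and $q$-exceptional respectively, and where terminality forces every $p$-exceptional (resp.\ $q$-exceptional) prime divisor to occur in $E$ (resp.\ $E'$) with strictly positive coefficient. As $K_X$ is modified nef it is pseudoeffective, hence so are $p^*K_X$ and $K_Y$, and I may apply Proposition \ref{DivZar} to $K_Y$. The crucial point is to identify its negative part in two ways. Because $K_X$ is modified nef, the negative part $N(p^*K_X)$ is $p$-exceptional (this is precisely the statement $N(K_X)=0$ read through Remark \ref{ZarSing}), so that
$$K_Y = P(p^*K_X) + \bigl(N(p^*K_X) + E\bigr)$$
writes $K_Y$ as the sum of the modified nef class $P(p^*K_X)$ and an effective $p$-exceptional $\mathbb{R}$-divisor. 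Since $p$-exceptional divisors form an exceptional family in the sense of Proposition \ref{DivZar}, uniqueness of the decomposition gives $N(K_Y) = N(p^*K_X)+E$, and the symmetric argument gives $N(K_Y) = N(q^*K_{X'})+E'$.

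Comparing the two expressions yields the equality of effective divisors $N(p^*K_X) + E = N(q^*K_{X'}) + E'$ on $Y$; call this common divisor $\Delta$. By construction $\Delta$ is simultaneously $p$-exceptional and $q$-exceptional. Now suppose some prime divisor $D$ on $Y$ is $p$-exceptional but not $q$-exceptional. On one hand $\mathrm{mult}_D \Delta \geqslant \mathrm{mult}_D E > 0$ by terminality; on the other hand, since $D$ is not $q$-exceptional while both $N(q^*K_{X'})$ and $E'$ are $q$-exceptional, $\mathrm{mult}_D \Delta = 0$, a contradiction. Hence no such $D$ exists, and by symmetry no $q$-exceptional divisor fails to be $p$-exceptional. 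The two sets of exceptional divisors therefore agree, so $f$ is an isomorphism in codimension one.

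The main obstacle is the single input driving the whole argument: that the negative part of the pulled-back canonical class is exceptional — equivalently $N(K_X)=N(K_{X'})=0$ — together with the uniqueness of the divisorial Zariski decomposition. In the smooth projective setting this is the negativity lemma used by Hanamura and Koll\'ar, but here it has to be run entirely within Boucksom's formalism and transported to the singular K\"ahler spaces $X$ and $X'$ via the resolution-based definition of Remark \ref{ZarSing}. I would accordingly spend most of the effort verifying that modified nefness of $K_X$ genuinely produces a $p$-exceptional negative part on an arbitrary resolution, and that effective $p$-exceptional divisors are exceptional in Boucksom's sense, so that the uniqueness statement legitimately applies.
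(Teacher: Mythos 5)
Your proposal follows the same skeleton as the paper's proof: resolve $f$ as in \eqref{Diagram}, write $p^*K_X + E = K_Y = q^*K_{X'} + E'$ with strictly positive discrepancies by terminality, take the divisorial Zariski decomposition of $K_Y$, and show that every $p$- or $q$-exceptional prime divisor occurs in $N(K_Y)$ while $N(K_Y)$ is contracted by both $p$ and $q$. However, the step carrying all the weight --- ``uniqueness of the decomposition gives $N(K_Y) = N(p^*K_X) + E$'' --- is a genuine gap: the divisorial Zariski decomposition is \emph{not} characterized as the unique way of writing a class as (modified nef) plus (effective exceptional). What Boucksom's theory \cite{Bou04} actually provides in that direction is a minimality statement: if $\alpha = P + \{N\}$ with $P$ modified nef and $N$ effective, then $\nu(\alpha, D) \leqslant \nu(P, D) + \nu(\{N\}, D) \leqslant \mathrm{mult}_D N$ for every prime divisor $D$, hence $N(\alpha) \leqslant N$. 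Applied to your decomposition this gives $N(K_Y) \leqslant N(p^*K_X) + E$, which is the \emph{wrong} direction: the inequality your multiplicity comparison needs is $E \leqslant N(K_Y)$. That no unconditional uniqueness holds is visible already on a surface: if $\alpha$ is a K\"ahler class and $C$ a $(-1)$-curve, then $\alpha - t[C]$ is still K\"ahler for small $t>0$, so $\alpha = (\alpha - t[C]) + t[C]$ writes $\alpha$ as a modified nef class plus an effective divisor whose single component $C$ is an exceptional family (no nonzero multiple of $[C]$ is modified nef, as $N(t\{C\}) = tC$), and yet $N(\alpha) = 0 \neq tC$. A secondary unaddressed point: even to invoke a uniqueness statement you would need the components of $N(p^*K_X)$ and of $E$ to form jointly a single exceptional family, which is not automatic for a union of two exceptional families.

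What rescues the argument --- and what the paper's terse ``thus $E_i$ and $F_j$ are components of $N(K_Y)$'' is really using --- is not the abstract Boucksom-exceptionality of $E$ but the pullback structure $K_Y = p^*K_X + E$ with $E$ exceptional \emph{for the morphism} $p$. For a bimeromorphic morphism and a pseudoeffective class $\beta$ on the base, minimal multiplicities along the exceptional prime divisors are additive: $\nu(p^*\beta + \{E\}, E_i) = \nu(p^*\beta, E_i) + \mathrm{mult}_{E_i}E$. Informally, any closed positive current $T$ in the class $p^*\beta + \{E\}$ differs from the pullback of its pushforward by a current supported on the $E_i$ with the expected coefficients, so $T$ acquires multiplicity at least $\mathrm{mult}_{E_i}E$ along each $E_i$; this is the current-theoretic negativity lemma (compare the argument of Lemma \ref{FujikiLemma} and the negativity lemma of Jia--Meng \cite{JM22} mentioned in the paper, or Nakayama's additivity of $\sigma$-multiplicities over pullbacks in the algebraic case), and it is exactly the input in Hanamura's and Koll\'ar's original proofs. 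With this statement substituted for your ``uniqueness'', the identity $N(K_Y) = N(p^*K_X) + E = N(q^*K_{X'}) + E'$ and your multiplicity comparison go through verbatim. Your other flagged verification --- that modified nefness of $K_X$ on the singular space forces $N(p^*K_X)$ to be $p$-exceptional, read through Remark \ref{ZarSing} --- is indeed necessary and is the same point the paper compresses when asserting that every component of $N(K_Y)$ is both $p$- and $q$-exceptional; so the architecture is correct, but the specific mechanism you proposed would fail as stated and must be replaced by the negativity-lemma-type additivity.
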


\begin{proof} We consider a resolution of indeterminacies \eqref{Diagram}. Let us denote by $E_i \subset Y$ (respectively, by $F_j \subset Y$) irreducible $p$-exceptional (respectively, $q$-exceptional) divisors. We have $$p^*K_{X} + \sum_ia_iE_i = K_Y = q^*K_{X'} + \sum_jb_jF_j$$ where all coefficients $a_i, b_j$ are strictly positive by terminality of $X$ and $X'$. In particular, $K_Y$ is pseudoeffective and therefore by Proposition  \ref{DivZar} it has a divisorial Zariski decomposition: $$K_Y = P(K_Y) + N(K_Y).$$ Since $p_*K_Y = K_X$ and $q_*K_Y = K_{X'}$ are modified nef, any irreducible component of the negative part $N(K_Y)$ is both $p$- and $q$-exceptional. On the other hand, exceptional divisors $E = \sum_ia_iE_i$ and $F = \sum_jb_jF_j$ are also exceptional in the sense of Proposition \ref{DivZar} and thus $E_i$ and $F_j$ for all $i, j$ are components of the negative part $N(K_Y)$. Therefore the map $f$ gives an isomorphism between open subsets $X\setminus p(\cup_{i,j}E_i \cup F_j)$ and $X'\setminus q(\cup_{i,j}E_i \cup F_j)$ whose complements have codimension at least 2.
\end{proof}

Let $f \colon X \dasharrow X'$ be a bimeromorphic map and let $\alpha \in H^{1,1}(X, \mathbb{R})$ be a class on $X$. To define the pushforward $f_*\alpha \in H^{1,1}(X', \mathbb{R})$ we take a resolution \eqref{Diagram} and set $f_*\alpha = q_*p^*\alpha$. If $\alpha$ is pseudoeffective then $f_*\alpha$ is pseudoeffective as well. Recall that the singular locus of a closed positive $(1,1)$-current $T$ is a subset defined locally as the set of points where a local plurisubharmonic potential of $T$ is not bounded from below. 

\begin{definition} Let $\alpha \in H^{1,1}(X, \mathbb{R})$ be a pseudoeffective class. The {\em singular locus} $S(\alpha)$ is defined to be the intersection of singular loci of all closed positive $(1,1)$-currents $T \in \alpha$. 
\end{definition}

If the class $\alpha$ is semipositive (e. g. if $\alpha$ is a K\"ahler class) then clearly $S(\alpha) = \emptyset$. 

We need the following lemma which is proved in \cite[Lemma 2.4]{Fuj81} for $Y$ and $X$ smooth manifolds. However, the proof works when $X$ is singular as well; we reproduce it here to be self-contained. Recall that a closed subset $S \subset X$ is {\em thin} in $X$ if for every point $x \in X$ there exists an open neighborhood $U$ of $x$ such that $S \cap U$ is contained in a nowhere dense analytic subset of $U$.

\begin{lemma}\label{FujikiLemma} Let $f \colon Y \to X$ be a bimeromorphic morphism from a smooth K\"ahler manifold $Y$ to a normal compact K\"ahler space $X$ with exceptional set $E$. Let $\alpha \in H^{1,1}(Y, \mathbb{R})$ be a pseudoeffective class. If $E \cap S(\alpha)$ is a thin subset in $E$ then we have the equality $$f^*f_*\alpha - \alpha = \sum_ir_i[E_i]$$ where $E_i$ are irreducible components of $E$ and $r_i$ are some nonnegative real numbers. 
\end{lemma}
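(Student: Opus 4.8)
The plan is to compare $f^*f_*\alpha$ with $\alpha$ by working on the smooth manifold $Y$, where the divisorial Zariski decomposition of Proposition \ref{DivZar} is available, and to control the difference along the exceptional set $E$ using the thinness hypothesis. First I would observe that $f^*f_*\alpha - \alpha$ is supported on $E$: away from $E$ the morphism $f$ is an isomorphism, so $f^*f_*\alpha$ and $\alpha$ agree on $Y \setminus E$, and hence their difference is a class represented by a current supported on the exceptional locus. Since $E$ has pure codimension one (as $f$ is a bimeromorphic morphism to a normal space) and its components $E_i$ are the only divisors contracted by $f$, the difference must be of the form $\sum_i r_i [E_i]$ for some real numbers $r_i$. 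The content of the lemma is therefore to prove that each $r_i \geqslant 0$.

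To pin down the signs I would compare the negative parts of the divisorial Zariski decompositions of $\alpha$ and of $f^*f_*\alpha$. The key point is that $f^*f_*\alpha$ is, in a suitable sense, the ``maximal'' pseudoeffective class on $Y$ pushing forward to $f_*\alpha$ modulo $E$, so that its negative part absorbs as much of $E$ as possible. Concretely, I would argue as follows. Take a closed positive current $T \in \alpha$ whose singular locus meets $E$ only in the thin subset $E \cap S(\alpha)$; such a current exists by definition of $S(\alpha)$ together with a limiting argument. Its pushforward $f_*T$ represents $f_*\alpha$ on $X$, and pulling back we can compare $f^* f_* T$ with $T$. The hypothesis that $E \cap S(\alpha)$ is thin in $E$ is exactly what guarantees that the potentials of $T$ are locally bounded below at the generic point of each $E_i$, so that the Lelong number (equivalently, the multiplicity) of $T$ along each $E_i$ is well-defined and finite. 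The difference $f^*f_*\alpha - \alpha$ then records, componentwise, the change in these multiplicities under $f^*f_*$, and one checks that pulling back after pushing forward can only increase the mass along an exceptional divisor, giving $r_i \geqslant 0$.

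The cleanest way to make the sign assertion rigorous is via intersection-theoretic positivity against the components $E_i$ on the fibers of $f$. I would test the class $\sum_i r_i[E_i]$ against curves contracted by $f$ and use the negative-definiteness of the intersection form on the exceptional divisors over a normal variety (the analogue of the classical statement that the intersection matrix of a contracted configuration is negative definite, valid here because $X$ is normal and $f$ is bimeromorphic). Combined with the fact that $f^*(f_*\alpha)$ pairs to zero with every $f$-contracted curve (being a pullback), while $\alpha$ pairs nonnegatively with the movable/contracted directions governed by its positive part, this forces the coefficients $r_i$ to be nonnegative.

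The main obstacle I expect is the careful treatment of the thinness hypothesis: one must ensure that restricting attention to currents in $\alpha$ whose singular loci avoid a dense open subset of each $E_i$ does not change the positive part, and that the Lelong numbers along the $E_i$ are genuinely controlled rather than merely bounded at a single generic point. In the smooth case of \cite[Lemma 2.4]{Fuj81} this is handled by Siu-type semicontinuity of Lelong numbers and the extension of plurisubharmonic potentials across thin sets; I would verify that these tools apply verbatim over $Y$, which is smooth by construction, so that the only role of the singularities of $X$ is to guarantee that the exceptional set $E$ is divisorial and that its components are $f$-contracted. Once these analytic facts are in place, the sign computation is formal.
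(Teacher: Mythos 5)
Your reduction to the sign question in the first paragraph is essentially right (modulo a slip: for $X$ merely normal the exceptional set need \emph{not} be pure codimension one --- small resolutions exist --- what saves the statement is that a closed $(1,1)$-current of order zero supported on an analytic set only charges the divisorial components, which is part of what \cite[Proposition 3.1.3]{King71} delivers). The genuine gap is in your third paragraph, where the sign $r_i \geqslant 0$ is supposed to be established. First, ``negative definiteness of the intersection form on the exceptional divisors'' is a surface phenomenon; in higher dimensions the relevant tool is the negativity lemma \cite[Lemma 3.39]{KM98}, and to apply it to $\alpha - f^*f_*\alpha = -\sum_i r_i[E_i]$ you would need this class to be nonnegative against every $f$-contracted curve, i.e. $\alpha \cdot C \geqslant 0$ for all such $C$ (since $f^*f_*\alpha \cdot C = 0$). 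But a pseudoeffective class has no such property: this is precisely the unjustified clause ``$\alpha$ pairs nonnegatively with the movable/contracted directions.'' A decisive symptom is that your intersection argument nowhere uses the thinness of $E \cap S(\alpha)$, yet the lemma is false without it: take $\alpha = [E_1]$ for an irreducible exceptional divisor $E_1$. This class is pseudoeffective and rigid (its only closed positive representative is the integration current, so $S(\alpha) = E_1$ is not thin in $E$), one has $f_*\alpha = 0$, hence $f^*f_*\alpha - \alpha = -[E_1]$ with $r_1 = -1$; and indeed $[E_1]\cdot C < 0$ for contracted curves covering $E_1$, exactly where your pairing claim breaks. Any correct proof must therefore consume the thinness hypothesis in the sign step, and yours does not.

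The paper's proof is instead purely local and potential-theoretic, and it is worth seeing where positivity actually comes from. Locally on $X$ one writes $f_*T = i\partial\bar\partial\varphi$, and near a point of $Y$ one writes $T = i\partial\bar\partial\psi$; the function $\varphi \circ f - \psi$ is pluriharmonic off $E$, and the thinness of $S(\alpha)\cap E$ is used, via the extension lemma \cite[Lemma 2.6]{Fuj81}, to extend $\varphi\circ f - \psi$ to a \emph{plurisubharmonic} function across $E$. Consequently the difference current $i\partial\bar\partial(\varphi\circ f) - i\partial\bar\partial\psi$ is closed, \emph{positive}, and supported on $E$, so \cite[Proposition 3.1.3]{King71} gives $\sum_i r_i(V\cap E_i)$ with $r_i \geqslant 0$: the nonnegativity of the coefficients is the positivity of the extended current, not an intersection-number computation. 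Your second paragraph's intuition (multiplicities can only grow under $f^*f_*$) is the right mechanism, but it has to be implemented at the level of potentials as above; note also that your selection of a single current $T$ whose singular locus meets $E$ only in $E\cap S(\alpha)$ ``by a limiting argument'' is itself delicate, since singular loci behave like unions, not intersections, under convex combinations of currents --- one needs currents with minimal singularities (upper envelopes) or Fujiki's own formulation to make this step honest. For the variant where $f_*\alpha$ is only nef, where the potential-theoretic control fails, the negativity lemma can be made to work, but by a different argument; see \cite[Proposition 2.1, Claim 2.2]{JM22} as indicated after Corollary \ref{FujikiCor}.
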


\begin{proof} Choose a closed positive current $T$ representing the class $\alpha$. The problem is local on $X$, so we may replace $X$ by an open neighborhood $U \subset \mathbb{C}^N$ such that $f_*T = i\partial\bar\partial\varphi$ for a plurisubharmonic function $\varphi$ on $U$. Then the class $f^*f_*\alpha$ is represented by a closed positive current $i\partial\bar\partial (\varphi \circ f)$. Let $y \in Y$ be an arbitrary point. We can choose an open neighborhood $V$ of $y$ such that on $V$ we have $T = i\partial\bar\partial \psi$ for a plurisubharmonic function $\psi$. Then the function $\varphi \circ f - \psi$ is pluriharmonic on $V \setminus (V \cap E)$. Since $Y$ is smooth and $S(\alpha) \cap E$ is thin in $E$ by assumption, we can extend the function $\varphi \circ f - \psi$ to a plurisubharmonic function on $V$ (see \cite[Lemma 2.6]{Fuj81}). We apply \cite[Proposition 3.1.3]{King71} (see also \cite[pp. 744-745]{Fuj81}) to get the following equality of currents on $V$: $$i\partial\bar\partial(\varphi \circ f) - i\partial\bar\partial\psi = \sum_ir_i(V \cap E_i)$$ where $r_i$ are nonnegative real numbers. Since the point $y \in Y$ was arbitrary, we obtain the desired equality of $(1, 1)$-classes. 
\end{proof}

The next step in our proof is a generalization of a result proved by Fujiki \cite[Corollary 3.3]{Fuj81} for smooth manifolds.

\begin{theorem} \label{Fujiki}
Let $f \colon X \dasharrow X'$ be a bimeromorphic map of normal compact K\"ahler spaces with rational singularities. Suppose that $f$ does not contract divisors. If there exists a K\"ahler class $\alpha \in H^{1,1}(X, \mathbb{R})$ such that the class $\alpha' = f_*\alpha$ is semipositive then the inverse map $f^{-1}$ is holomorphic.
\end{theorem}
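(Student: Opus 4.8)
The plan is to produce, from a resolution of indeterminacies of $f$, a diagram to which Lemma \ref{Isolated} applies, using the semipositivity of $\alpha'$ together with Lemma \ref{FujikiLemma} to control the exceptional divisors. Concretely, I would take a resolution \eqref{Diagram} with $Y$ smooth K\"ahler and $p, q$ bimeromorphic morphisms. Since $\alpha$ is K\"ahler on $X$ its singular locus is empty, so $S(p^*\alpha)$ is contained in the $p$-exceptional locus; in particular, for the morphism $q \colon Y \to X'$ the intersection $\Exc(q) \cap S(p^*\alpha)$ is a thin subset of $\Exc(q)$, and Lemma \ref{FujikiLemma} applied to $q$ yields $q^*q_*(p^*\alpha) - p^*\alpha = \sum_j r_j [F_j]$ with $r_j \geqslant 0$, where $F_j$ are the $q$-exceptional divisors. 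By definition $q_* p^* \alpha = f_* \alpha = \alpha'$, which is semipositive by hypothesis, so its pullback $q^*\alpha'$ is represented by a smooth semipositive form.

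The key step is to show that every $q$-exceptional divisor $F_j$ is in fact $p$-exceptional, i.e. that $f^{-1}$ contracts no divisor, which is exactly the hypothesis that $f$ does not contract divisors read on the inverse side; but more importantly I must rule out indeterminacy of $f^{-1}$, which requires the fibral statement of Lemma \ref{Isolated}. To get there, I would argue that for a curve $C \subset Y$ contracted by $q$ to a point, one has $p^*\alpha \cdot C = \alpha' \cdot q_* C - \sum_j r_j (F_j \cdot C) = -\sum_j r_j (F_j \cdot C)$, since $q_* C = 0$. As $\alpha$ is K\"ahler, the left-hand side is nonnegative, and in fact positive unless $C$ is also contracted by $p$. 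The rigidity coming from $r_j \geqslant 0$ and the exceptionality of the $F_j$ (their classes generate an extremal cone meeting the modified nef cone only at $0$) should force $p^*\alpha \cdot C = 0$, hence $\alpha \cdot p_* C = 0$, so $p$ contracts $C$ as well. This means every fiber of $q$ is mapped by $p$ to a point, which is precisely the hypothesis of Lemma \ref{Isolated}.

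Having established that every $q$-fiber maps to a point under $p$, Lemma \ref{Isolated} immediately gives that $f^{-1} = (f)^{-1}$ is holomorphic, which is the assertion. I would phrase the final step as: the resolution \eqref{Diagram} satisfies the hypothesis of Lemma \ref{Isolated} with the roles arranged so that $q$ plays the role of the second morphism, and therefore the inverse map is a morphism.

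The step I expect to be the main obstacle is the positivity bookkeeping in the middle paragraph, specifically extracting $p^*\alpha \cdot C = 0$ for every $q$-contracted curve $C$ from the decomposition $p^*\alpha = q^*\alpha' - \sum_j r_j[F_j]$. The delicate point is that $q^*\alpha' \cdot C = \alpha' \cdot q_* C = 0$ automatically, so the equation reduces to $p^*\alpha \cdot C = -\sum_j r_j (F_j \cdot C)$, and I must show the right-hand side cannot be positive while also using nonnegativity of the left-hand side to pin it to zero. Handling the intersection numbers $F_j \cdot C$ on a possibly high-dimensional singular target — where intersection theory must be done on the smooth $Y$ against a movable class and interpreted via currents rather than naive divisor intersections — is where I would need to be most careful, and where invoking the precise formulation of Lemma \ref{FujikiLemma} and the exceptionality clause of Proposition \ref{DivZar} does the real work.
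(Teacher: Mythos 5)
There is a genuine gap in your middle paragraph, at exactly the step you flagged as delicate. From the single relation $p^*\alpha = q^*\alpha' - \sum_j r_j[F_j]$ you cannot conclude $p^*\alpha\cdot C = 0$ for every $q$-contracted curve $C$: the intersection numbers $F_j\cdot C$ are typically \emph{negative} for curves $C$ contained in the exceptional divisor $F_j$, so the right-hand side $-\sum_j r_j(F_j\cdot C)$ can perfectly well be strictly positive, consistent with $p_*C\neq 0$, and no contradiction arises. The ``exceptionality clause'' of Proposition \ref{DivZar} is a statement about cones of $(1,1)$-classes (the cone spanned by the $[D_i]$ meets the modified nef cone only at $0$); it says nothing about the signs of $F_j\cdot C$ for individual contracted curves and cannot supply the rigidity you invoke. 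A structural way to see the argument must fail: after deriving the first relation, the only facts you actually use are $r_j\geqslant 0$ and $q_*C=0$ (the latter holds for \emph{any} class pulled back along $q$, by the projection formula), so your reasoning never uses semipositivity of $\alpha'$ in an essential way. It would therefore ``prove'' that any small bimeromorphic map --- e.g.\ a threefold flop, where $f_*\alpha$ is not semipositive and $f^{-1}$ is not holomorphic --- has holomorphic inverse, which is false.

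The missing idea in the paper's proof is a \emph{second} application of Lemma \ref{FujikiLemma}, this time to $p$ with the class $q^*\alpha'$: this is legitimate precisely because $\alpha'$ is semipositive (so $S(q^*\alpha')=\emptyset$), and because $f$ contracts no divisors, every $F_j$ is $p$-exceptional, whence $p_*q^*\alpha' = p_*\bigl(p^*\alpha + \sum_j r_j[F_j]\bigr) = \alpha$. This yields $p^*\alpha - q^*\alpha' = \sum_i s_i[E_i]$ with $s_i\geqslant 0$; adding it to the first relation gives $\sum_j r_j[F_j] + \sum_i s_i[E_i] = 0$, and since a nonzero effective divisor on a compact K\"ahler manifold has nonzero class (pair with a power of a K\"ahler form), all coefficients vanish and $p^*\alpha = q^*\alpha'$. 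Only then does your curve test work: for $\widetilde{C}$ contracted by $q$ with $p_*\widetilde{C} = mC \neq 0$ one gets $0 < \alpha\cdot mC = p^*\alpha\cdot\widetilde{C} = q^*\alpha'\cdot\widetilde{C} = 0$, a contradiction, so every fiber of $q$ is mapped by $p$ to a point and Lemma \ref{Isolated} applies. Two smaller points: your claim that $S(p^*\alpha)$ is ``contained in the $p$-exceptional locus'' is both weaker than what is true ($p^*\alpha$ is smooth semipositive, so $S(p^*\alpha)=\emptyset$) and insufficient as stated, since containment in $\Exc(p)$ alone does not make $\Exc(q)\cap S(p^*\alpha)$ thin in $\Exc(q)$; and to know the fibers of $q$ are covered by curves you must arrange $q$ to be projective in the resolution \eqref{Diagram}, as the paper does at the outset.
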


\begin{proof} Again, we consider a resolution of indeterminacies \eqref{Diagram}. We may assume that $q$ is projective. Moreover, we may assume that $\mathrm{Exc}(p) = \sum_iE_i$ and $\mathrm{Exc}(q) = \sum_jF_j$ are divisors.
By definition of the pushforward we have $\alpha' = f_*\alpha = q_*p^*\alpha$; let us consider the class $$q^*q_*p^*\alpha - p^*\alpha \in H^{1,1}(Y, \mathbb{R}).$$ Since the class $p^*\alpha$ is represented by a smooth semipositive form, we have $S(\alpha) = \emptyset$ so we can apply Lemma \ref{FujikiLemma} and obtain 
\begin{equation}\label{alpha1}
q^*q_*p^*\alpha - p^*\alpha = q^*\alpha' - p^*\alpha = \sum_jr_j[F_j]
\end{equation} for some $r_j \geqslant 0$. Since $\alpha'$ is semipositive by assumption, the same is true for the class $q^*\alpha'$, therefore $S(q^*\alpha') = \emptyset$. Note that by assumption $f$ does not contract divisors, so every $q$-exceptional divisor is also $p$-exceptional. Therefore from \eqref{alpha1} we have $$p_*q^*\alpha' = p_*(p^*\alpha + \sum_jr_j[F_j]) = \alpha + \sum_jr_jp_*[F_j] = \alpha.$$ We apply Lemma \ref{FujikiLemma} to the map $p$ and obtain the equality \begin{equation}\label{alpha2}
p^*p_*q^*\alpha' - q^*\alpha' = p^*\alpha - q^*\alpha' = \sum_is_i[E_i]
\end{equation} for some $s_i \geqslant 0$. Adding the expressions \eqref{alpha1} and \eqref{alpha2} we finally obtain the equality
\begin{equation}
\label{Equality}
p^*\alpha = q^*\alpha'.
\end{equation}

If $q$ is an isomorphism then $f^{-1}$ is clearly holomorphic. Suppose that $q$ is not an isomorphism. Then by projectivity of $q$ every fiber is projective and therefore is covered by curves. If every irreducible curve in every fiber of $q$ is contracted to a point by $p$, then $f^{-1}$ is holomorphic by Lemma \ref{Isolated}. Thus we may assume that there exists an irreducible curve $\widetilde{C} \subset Y$ contracted to a point by $q$ and such that $p(\widetilde{C})$ is a curve. Write $p_*\widetilde{C} = mC$ for some $m > 0$. By the projection formula and equality \eqref{Equality} we obtain 
$$ 0 < \alpha \cdot mC = \alpha \cdot p_*\widetilde{C} = p^*\alpha \cdot \widetilde{C} = q^*\alpha'\cdot \widetilde{C} = \alpha' \cdot q_*\widetilde{C} = 0,$$ which is a contradiction. Thus the set $\mathrm{Ind}(f^{-1})$ is empty and $f^{-1}$ is holomorphic.
\end{proof}

Theorem \ref{Fujiki} gives a sufficient condition for an isomorphism in codimension one between singular K\"ahler spaces to be biholomorphic in terms of its action on K\"ahler classes.

\begin{corollary}\label{FujikiCor} Let $f \colon X \dasharrow X'$ be a bimeromorphic map of normal compact K\"ahler spaces with rational singularities. Suppose that $f$ is an isomorphism in codimension one. If there exists a K\"ahler class $\alpha \in H^{1,1}(X, \mathbb{R})$ such that the class $\alpha' = f_*\alpha$ is also K\"ahler then $f$ is biholomorphic.
\end{corollary}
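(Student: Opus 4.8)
The plan is to deduce the corollary directly from Theorem~\ref{Fujiki} by exploiting the symmetry of the hypotheses. First I would observe that since $f$ is an isomorphism in codimension one, it certainly does not contract any divisors; moreover its inverse $f^{-1}$ also does not contract divisors, and one checks that $(f^{-1})_* = (f_*)^{-1}$ on $H^{1,1}$ via the common resolution \eqref{Diagram}, so that $(f^{-1})_*\alpha' = \alpha$. This sets up a perfectly symmetric situation between $f$ and $f^{-1}$.

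Next I would apply Theorem~\ref{Fujiki} twice. Applied to $f$ itself, with the K\"ahler (hence semipositive) class $\alpha$ and $\alpha' = f_*\alpha$ assumed K\"ahler (hence semipositive), the theorem yields that $f^{-1}$ is holomorphic. Applied to the map $f^{-1}$, which does not contract divisors, with the K\"ahler class $\alpha'$ on $X'$ and pushforward $(f^{-1})_*\alpha' = \alpha$ assumed K\"ahler, the theorem yields that $(f^{-1})^{-1} = f$ is holomorphic. Since both $f$ and $f^{-1}$ are holomorphic bimeromorphic maps of normal compact K\"ahler spaces, $f$ is a biholomorphism, which is the desired conclusion.

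I do not expect a genuine obstacle here, since all the analytic work has already been carried out in Theorem~\ref{Fujiki}; the corollary is essentially a bookkeeping statement that packages the one-sided conclusion of that theorem into a two-sided one. The only point requiring a little care is verifying that the hypotheses of Theorem~\ref{Fujiki} genuinely hold for $f^{-1}$ as well as for $f$: namely that $f^{-1}$ does not contract divisors (immediate from $f$ being an isomorphism in codimension one), that $\alpha'$ is K\"ahler (given), and that its pushforward under $f^{-1}$ equals $\alpha$ and is therefore K\"ahler (given). Once these are recorded, the double application is symmetric and yields biholomorphicity at once.
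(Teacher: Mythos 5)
Your proposal is correct and matches the paper's intended argument: the paper states this corollary without a separate proof, as an immediate consequence of Theorem~\ref{Fujiki} applied symmetrically to $f$ and $f^{-1}$, exactly as you do. The one point you gloss, namely $(f^{-1})_*\alpha' = \alpha$, is indeed fine: it follows from the equality $p^*\alpha = q^*\alpha'$ established in the proof of Theorem~\ref{Fujiki} (or, alternatively, by functoriality of pushforward once the first application of the theorem shows $f^{-1}$ is holomorphic, since then $(f^{-1})_*q_*p^*\alpha = p_*p^*\alpha = \alpha$).
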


\begin{remark} Jia and Meng proved that the conclusion of Theorem \ref{Fujiki} holds if the class $f_*\alpha$ is merely assumed to be nef ( see \cite[Proposition 2.1]{JM22}). In this case our proof of the equality \eqref{Equality} does not work since we cannot guarantee that $S(q^*\alpha') \cap \mathrm{Exc}(p)$ is thin in $\mathrm{Exc}(p)$. Jia and Meng overcome this difficulty by using the negativity lemma (\cite[Claim 2.2]{JM22}).
\end{remark}

\section{The main results}

Now we can state and prove our main results. The first one generalizes \cite[Proposition 4.5]{PS21b} to the singular case. We denote by $\Psaut(X)$ the group of pseudoautomorphisms of $X$, that is, bimeromorphic maps from $X$ to itself which are isomorphisms in codimension one. This group acts on $H^2(X, \mathbb{Q})$ by pushforward $f \to f_* = (f^{-1})^*$; we also denote $$\Psaut(X)_{\tau} = \{f \in \Psaut(X) \mid f_*|_{H^{2}(X, \QQ)} = \mathrm{id} \}.$$

\begin{theorem} \label{MainThm} Let $X$ be a normal compact K\"ahler space with rational singularities. Then the group $\Psaut(X)$ is Jordan. 
\end{theorem}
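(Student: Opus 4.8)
The plan is to realize $\Psaut(X)$ as an extension whose two ends we can control separately, and then invoke Proposition \ref{Extensions}. The natural exact sequence to exploit is
\[
1 \to \Psaut(X)_{\tau} \to \Psaut(X) \to \Gamma \to 1,
\]
where $\Gamma$ denotes the image of the action of $\Psaut(X)$ on $H^2(X,\QQ)$ by pushforward. The quotient $\Gamma$ sits inside $\mathrm{GL}(H^2(X,\QQ)) \cong \mathrm{GL}_n(\QQ)$ for $n = \dim_{\QQ} H^2(X,\QQ)$, so by Minkowski's theorem (Theorem \ref{Minkowski}) it has \emph{bounded} finite subgroups. Thus Proposition \ref{Extensions} reduces everything to showing that the kernel $\Psaut(X)_{\tau}$ is Jordan.

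First I would record that the pushforward action on $H^2(X,\QQ)$ is well-defined and gives a genuine homomorphism; here I use that $\Psaut(X)$ consists of isomorphisms in codimension one, so pullback of cohomology classes behaves functorially and $f_* = (f^{-1})^*$ makes sense on $H^2$. This requires the embedding of $N^1(X)$ into $H^2(X,\mathbb{R})$ guaranteed by the rational-singularities hypothesis (Remark \ref{Singularities}), so that K\"ahler classes are detected in $H^2$. The key point for the kernel is the following: \textbf{any $f \in \Psaut(X)_\tau$ fixes every K\"ahler class}, since a K\"ahler class lies in $H^{1,1}(X,\mathbb{R}) \subset H^2(X,\mathbb{R})$ and $f_*$ acts trivially there. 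In particular, choosing any K\"ahler class $\alpha$, we have $f_*\alpha = \alpha$, which is again K\"ahler. By Corollary \ref{FujikiCor} this forces $f$ to be \emph{biholomorphic}. Hence $\Psaut(X)_\tau \subseteq \Aut(X)$.

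Once this inclusion is established, the conclusion is immediate: $\Aut(X)$ is Jordan by Kim's theorem (Theorem \ref{Kim}), and a subgroup of a Jordan group is Jordan. Therefore $\Psaut(X)_\tau$ is Jordan, and the extension above shows $\Psaut(X)$ is Jordan.

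\textbf{The main obstacle} I anticipate is the rigorous passage from ``$f_*$ acts trivially on $H^2(X,\QQ)$'' to ``$f_*$ fixes the chosen K\"ahler class,'' which hinges on the K\"ahler class being a rational cohomology class or at least on the triviality of the action extending from $H^2(X,\QQ)$ to the real points $H^2(X,\mathbb{R})$. Since the action is $\QQ$-linear, it extends by scalars to $H^2(X,\mathbb{R})$ and remains the identity there, so the chosen K\"ahler class---regardless of rationality---is fixed; this is where one must be careful to phrase the kernel in terms of the $\QQ$-structure while applying Corollary \ref{FujikiCor} over $\mathbb{R}$. A secondary subtlety is confirming that elements of $\Psaut(X)_\tau$, being isomorphisms in codimension one with $f_*\alpha$ K\"ahler, genuinely satisfy the hypotheses of Corollary \ref{FujikiCor}; this is exactly the content of that corollary, so no further work is needed beyond verifying that $f$ is an isomorphism in codimension one, which holds by the very definition of $\Psaut(X)$.
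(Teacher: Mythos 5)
Your proposal is correct and follows essentially the same route as the paper: the same exact sequence, Minkowski's theorem for the image in $\mathrm{GL}(H^2(X,\QQ))$, the observation that $\Psaut(X)_\tau$ fixes every K\"ahler class after extending scalars to $H^2(X,\mathbb{R})$, and Kim's theorem plus Proposition \ref{Extensions} to conclude. The only cosmetic difference is that you invoke Corollary \ref{FujikiCor} where the paper cites Theorem \ref{Fujiki} directly; since elements of $\Psaut(X)$ are isomorphisms in codimension one, the two are interchangeable here.
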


\begin{proof} The action of $\mathrm{PsAut}(X)$ on $H^{2}(X, \mathbb{Q})$ gives an exact sequence of groups $$1 \to \Psaut(X)_{\tau} \to \Psaut(X) \to \Psaut(X)/\Psaut(X)_{\tau} \to 1.$$ Any element $f \in \mathrm{PsAut}(X)_{\tau}$ acts trivially on the space $H^{2}(X, \mathbb{R}) = H^{2}(X, \QQ) \otimes_{\mathbb{Q}} \mathbb{R}$ and in particular it preserves every K\"ahler class. By Theorem \ref{Fujiki}, the group $\Psaut(X)_{\tau}$ is a subgroup of $\Aut(X)$. Therefore it is Jordan by Theorem \ref{Kim}. Note that the quotient group $\Psaut(X)/\Psaut(X)_{\tau}$ embeds into the linear group $\mathrm{GL}(H^{2}(X, \QQ))$ and therefore by Theorem \ref{Minkowski} it has bounded finite subgroups. Hence by Proposition \ref{Extensions} the group $\Psaut(X)$ is Jordan as well. 
\end{proof}

Assuming that $X$ admits a quasi-minimal model we can obtain the same conclusion for $\Bim(X)$. In particular, this gives another proof of \cite[Theorem 1.8 (ii)]{PS14}.

\begin{theorem} \label{MainThm2} Let $X$ be a compact K\"ahler space admitting a quasi-minimal model. Then the group $\mathrm{Bim}(X)$ is Jordan. 
\end{theorem}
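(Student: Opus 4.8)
The plan is to reduce the Jordan property for $\Bim(X)$ to the already-established Jordan property for $\Psaut(X')$, where $X'$ is a quasi-minimal model of $X$. First I would fix a quasi-minimal model $\varphi \colon X \dashrightarrow X'$, so that $X'$ has terminal $\mathbb{Q}$-factorial singularities with $K_{X'}$ modified nef. Since bimeromorphic maps that share a common domain have isomorphic bimeromorphic automorphism groups, we have a canonical isomorphism $\Bim(X) \cong \Bim(X')$ obtained by conjugating with $\varphi$: explicitly, $g \mapsto \varphi \circ g \circ \varphi^{-1}$. Thus it suffices to prove that $\Bim(X')$ is Jordan, and since the Jordan property is preserved under group isomorphism, I may replace $X$ by $X'$ from the start and assume that $X$ itself is a quasi-minimal model.

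The key structural step is to show that, under this assumption, $\Bim(X)$ actually equals $\Psaut(X)$. Take any $f \in \Bim(X)$, viewed as a bimeromorphic self-map $f \colon X \dashrightarrow X$. Because both source and target equal $X$, which is a quasi-minimal model, both $K_X$ and $K_X$ (playing the roles of $K_X$ and $K_{X'}$) are modified nef and the singularities are terminal. Proposition \ref{Kollar} then applies directly and tells us that $f$ is an isomorphism in codimension one, i.e. $f \in \Psaut(X)$. Running this argument for every $f$ yields the inclusion $\Bim(X) \subseteq \Psaut(X)$; the reverse inclusion is trivial since pseudoautomorphisms are in particular bimeromorphic self-maps. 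Hence $\Bim(X) = \Psaut(X)$ as groups.

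Having identified the two groups, I would invoke Theorem \ref{MainThm}: since $X$ is a normal compact K\"ahler space with terminal (hence rational) singularities, the group $\Psaut(X)$ is Jordan. Combining this with the equality $\Bim(X) = \Psaut(X)$ and the isomorphism $\Bim(X) \cong \Bim(X')$ from the first step gives that the original $\Bim(X)$ is Jordan, completing the proof. The remark that this reproves \cite[Theorem 1.8 (ii)]{PS14} is immediate in the projective case, where quasi-minimal models exist for non-uniruled varieties.

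The step I expect to require the most care is verifying that every element of $\Bim(X)$ genuinely satisfies the hypotheses of Proposition \ref{Kollar}, and in particular checking that the terminality and modified-nefness conditions are preserved so that the proposition can be applied to a self-map rather than to a map between two a priori different quasi-minimal models. One subtlety is that the hypotheses of Theorem \ref{MainThm} require only \emph{rational} singularities whereas the definition of quasi-minimal model carries the stronger terminal $\mathbb{Q}$-factorial condition; I would note explicitly that terminal singularities are rational (as recorded in the excerpt via \cite{Elk81}), so the passage to Theorem \ref{MainThm} is legitimate. The remaining points, such as the conjugation isomorphism $\Bim(X) \cong \Bim(X')$ and the transport of the Jordan property across it, are routine and can be stated without detailed computation.
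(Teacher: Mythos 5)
Your proposal is correct and follows essentially the same route as the paper: pass to the quasi-minimal model via conjugation by $\varphi$, use Proposition \ref{Kollar} to identify $\mathrm{Bim}(X')$ with $\Psaut(X')$, and conclude by Theorem \ref{MainThm}. Your explicit checks --- that the self-map case of Proposition \ref{Kollar} applies and that terminal singularities are rational, so Theorem \ref{MainThm} is applicable --- are exactly the details the paper leaves implicit.
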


\begin{proof} By assumption, there exists a bimeromorphic map $\varphi \colon X \dasharrow X'$ such that $X'$ is a compact K\"ahler space with terminal $\mathbb{Q}$-factorial singularities and $K_{X'}$ is modified nef. By Proposition \ref{Kollar} we have $$\Bim(X) \simeq \mathrm{Bim}(X') = \mathrm{PsAut}(X')$$ and the latter group is Jordan by Theorem \ref{MainThm}.
\end{proof}

In particular, we can prove Theorem \ref{MainThm3}.

\begin{proof}[Proof of Theorem 1.2] Replace $X$ by a resolution of singularities; then the statement follows immediately from Theorem \ref{MainThm2} and Theorem \ref{HP}.
\end{proof}

\begin{remark}\label{Flops} We can also prove Theorem \ref{MainThm3} using the fact that any bimeromorphic map between minimal models in dimension 3 can be decomposed as a sequence of analytic flops \cite[Theorem 4.9]{Kol89}.
\end{remark}

\begin{remark} Conjecturally, any compact K\"ahler space $X$ which is not uniruled should admit a minimal model (or at least a quasi-minimal model). By Theorem \ref{MainThm2} this would give the Jordan property for $\Bim(X)$ in higher dimensions. In \cite{CH20} Cao and H\"oring made some progress in this direction. 
\end{remark}

\flushleft{Aleksei Golota \\
National Research University Higher School of Economics, Russian Federation \\
Laboratory of Algebraic Geometry, NRU HSE, 6 Usacheva str.,Moscow, Russia, 119048 \\
\texttt{golota.g.a.s@mail.ru, agolota@hse.ru}}

\end{document}